\renewcommand{\le}{\leqslant}
\renewcommand{\ge}{\geqslant}
\newcommand{\ptl}{\partial}
\newcommand{\rr}{{\mathbb{R}}}
\newcommand{\la}{\lambda}
\newcommand{\hh}{{\mathbb{H}}}
\newcommand{\hhh}{\mathcal{H}}
\newcommand{\escpr}[1]{g(#1)}
\newcommand{\Sg}{\Sigma}
\newcommand{\Om}{\Omega}
\newcommand{\eps}{\varepsilon}
\newcommand{\ga}{\gamma}
\newcommand{\Ga}{\Gamma}
\newcommand{\nuh}{\nu_{h}}
\newcommand{\n}{\nabla}
\DeclareMathOperator{\divv}{div}
\DeclareMathOperator{\tor}{Tor}
\DeclareMathOperator{\Jac}{Jac}
\newtheorem{theorem}{Theorem}[section]
\newtheorem{proposition}[theorem]{Proposition}
\newtheorem{lemma}[theorem]{Lemma}
\newtheorem{corollary}[theorem]{Corollary}
\theoremstyle{definition}
\theoremstyle{remark}
\newtheorem{remark}[theorem]{Remark}
\numberwithin{equation}{section}
\begin{document}

\title[Regularity of $C^1$ surfaces with prescribed mean curvature]{Regularity of $C^1$ surfaces with prescribed mean curvature\\ in three-dimensional contact sub-Riemannian manifolds}

\author[M.~Galli]{Matteo Galli} \address{Dipartimento di Matematica \\
Universit\`a di Bologna \\ Piazza di Porta S. Donato 5 \\ 40126 Bologna, Italy}
\email{matteo.galli8@unibo.it}

\author[M.~Ritor\'e]{Manuel Ritor\'e} \address{Departamento de
Geometr\'{\i}a y Topolog\'{\i}a \\
Universidad de Granada \\ E--18071 Granada \\ Espa\~na}
\email{ritore@ugr.es}

\date{\today}

\thanks{M. Galli has been supported by the People Programme (Marie Curie Actions) of the European Union's Seventh
Framework Programme FP7/2007-2013/ under REA grant agreement n. 607643. M. Ritor\'e has been supported by Mec-Feder MTM2010-21206-C02-01 and Mineco-Feder MTM2013-48371-C2-1-P research grants.}
\subjclass[2000]{53C17, 49Q20} 
\keywords{Sub-Riemannian geometries}

\begin{abstract}
In this paper we consider surfaces of class $C^1$ with continuous prescribed mean curvature in a three-dimensional contact sub-Riemannian manifold and prove that their characteristic curves are of class $C^2$. This regularity result also holds for critical points of the sub-Riemannian perimeter under a volume constraint. All results are valid in the first Heisenberg group $\hh^1$.
\end{abstract}

\maketitle

\bibliographystyle{abbrv}

\thispagestyle{empty}

\section{Introduction}

Recently Cheng, Hwang and Yang \cite{MR2262784}, \cite{MR2481053}, have considered the functional
\begin{equation}
\label{eq:F}
\mathcal{F}(u)=\int_\Om |\nabla u+\vec{F}|+\int_\Om fu,
\end{equation}
on a domain $\Om\subset\rr^{2n}$, where $\vec{F}$ is a vector field and $f\in L^\infty(\Om)$. In case $\vec{F}(x,y)=(-y,x)$, the integral $\int_\Om |\nabla u+\vec{F}|$ is the sub-Riemannian area of the horizontal graph of the function $u$ in the Heisenberg group $\hh^n$. Among several interesting results, they proved in \cite[Thm.~A]{MR2481053} that, in case $n=1$, $u\in C^1(\Om)$ is an stationary point of $\mathcal{F}$ and $f\in C^0(\Om)$, the integral curves of the vector field $((\nabla u+\vec{F})/|\nabla u+\vec{F}|)^\bot$, defined in the set $|\nabla u+\vec{F}|\neq 0$, are of class $C^2$. The geometric meaning of their result is that the projection of the characteristic curves of the graph of $u$ are of class $C^2$. A stationary point $u$ of $\mathcal{F}$ satisfies weakly the prescribed mean curvature equation
\begin{equation}
\label{eq:pmc}
\divv\bigg(\frac{\nabla u+\vec{F}}{|\nabla u+\vec{F}|}\bigg)=f.
\end{equation}
Theorem~A in \cite{MR2481053} is well-known for $C^2$ minimizers and generalizes a previous result by Pauls \cite[Lemma~3.3]{MR2225631} for $H$-minimal surfaces with components of the horizontal Gauss map in the class $W^{1,1}$. For lipschitz continuous vanishing viscosity minimal graphs, it was proven by Capogna, Citti and Manfredini \cite[Cor.~1.6]{MR2583494}.

In order to extend this result to arbitrary surfaces, it is natural to replace $\mathcal{F}$   by the sub-Riemannian prescribed mean curvature functional
\begin{equation}
\label{eq:JOm}
\mathcal{J}(E,B)=P(E,B)+\int_{E\cap B} f,
\end{equation}
where $E$ is a set of locally finite sub-Riemannian perimeter in $\Om$, $P(E,B)$ is the relative sub-Riemannian perimeter of $E$ in a bounded open set $B\subset\Om$, and $f\in L^\infty(\Om)$. If $E\subset\hh^n$ is the subgraph of a function $t=u(x,y)$ in the Heisenberg group $\hh^n$, then $\mathcal{J}(E)$  coincides with \eqref{eq:F} taking $\vec{F}(x,y)=(-y,x)$.
The notion of sub-Riemannian perimeter used in sub-Riemannian geometry was first introduced by Capogna, Danielli and Garofalo \cite{MR1312686}  for Carnot-Carath\'eodory spaces. General properties and existence of sets with minimum perimeter were proved later by Garofalo and Nhieu \cite{MR1404326}. A rather complete theory of finite perimeter sets in the Heisenberg group $\hh^n$ following De Giorgi's original arguments was developed by Franchi, Serapioni and Serra-Cassano \cite{MR1871966}, and later extended to step $2$ Carnot groups \cite{MR1984849} by the same authors. The recent monograph \cite{MR2312336} provides a quite complete survey on recent progress on the subject.

We have defined the prescribed mean curvature functional following Massari \cite{MR0355766}, who considered minimizers of $\mathcal{J}$ for the Euclidean relative perimeter. He obtained existence and regularity results for this problem and observed that, in case $E$ is the subgraph of a Lipschitz function $u$ defined on an open bounded set $D\subset\rr^{n-1}$, the function $u$ satisfies weakly the prescribed mean curvature equation
\[
\divv\bigg(\frac{\nabla u}{\sqrt{1+|\nabla u|^2}}\bigg)(x)=f(x,u(x))
\]
for $x\in D$. In case $\ptl E\cap\Om$ is a hypersurface of class $C^2$ then the mean curvature of $\ptl E$ at a point $p\in\ptl E$ equals $g(p)$. See also Maggi \cite[pp.~139--140]{MR2976521}.


\mbox{}

The aim of this paper is to extend Cheng, Hwang and Yang's regularity result for characteristic curves \cite[Thm.~A]{MR2481053} from $C^1$ horizontal graphs satisfying weakly the mean curvature equation in the first Heisenberg group $\hh^1$ to surfaces of class $C^1$ with prescribed mean curvature in arbitrary three-dimensional contact sub-Riemannian manifolds. 

In this setting, the Euclidean perimeter is replaced by the sub-Riemannian one and the integral of the function $f$ is computed using Popp's measure \cite[\S~10.6]{MR1867362}, \cite{MR3108867}. The minimizing condition will be replaced by a stationary one. Our ambient space will be a three-dimensional contact manifold with a sub-Riemannian metric defined on its horizontal distribution. In particular, no assumption on the existence of a pseudo-hermitian structure is made. We shall prove in Theorem~\ref{th:main}
\begin{quotation}
Let $E\subset\Om$ be a set with $C^1$ boundary and prescribed mean curvature $f\in C^0(\Om)$ in a domain $\Om\subset M$ of a three-dimensional contact sub-Riemannian manifold. Then characteristic curves in $\ptl E$ are of class $C^2$.
\end{quotation}
We remark that  \cite[Thm.~A]{MR2481053} states that the projection of characteristic curves to the plane $t=0$ is of class $C^2$, but together with \cite[(2.22)]{MR2165405}  this implies that the characteristic curves themselves are $C^2$. We thank J.-H. Cheng for pointing out this fact. 

 While the proof of \cite[Thm.~A]{MR2481053} was based on the integral formula \cite[(2.3)]{MR2481053}, see also (3.7) in \cite[Remark~3.4]{gr-aim}, the proof of Theorem~\ref{th:main} is purely variational and follows by localizing the first variation of perimeter along a characteristic curve. A much weaker version of Theorem~\ref{th:main} was given in \cite[Thm.~3.5]{gr-aim}, where it was proven that the regular part of an area-stationary surface of class $C^1$ in the sub-Riemannian Heisenberg group $\hh^1$ is foliated by horizontal geodesics. Theorem~\ref{th:main} provides a new result even for the case of the first Heisenberg group $\hh^1$.

The regularity of characteristic curves proven in Theorem~\ref{th:main} allows us to define in Section~\ref{sec:mc} a mean curvature function $H$ in the regular part of $\ptl E$, that coincides with $f$. As a consequence of the definition of the mean curvature, we shall prove in Proposition~\ref{prop:C^k} that characteristic curves are of class $C^{k+2}$ in case $f$ is of class $C^k$ when restricted to a characteristic direction. This holds, e.g., when $f\in C^k(\Om)$ of $f\in C_\mathbb{H}^k(\Om)$, the space of functions with continuous horizontal derivatives of order $k$, $k\ge 1$. This class contains $C^1(\Om)$ when $k\ge 2$. Critical points of the perimeter, eventually under a volume constraint, and $C^1$ boundary, have constant prescribed mean curvature as shown in Section~\ref{sec:pmc}. Hence Theorem~\ref{th:main} applies to these sets and implies that the regular parts of their boundaries are foliated by $C^\infty$ characteristic curves, see Proposition~\ref{prop:critical}.


We have organized this paper into several sections. In the second one we provide the necessary background on contact sub-Riemannian manifolds and sets of finite perimeter, and we recall the first variation formula for $C^1$ surfaces following \cite{Gaphd}. In Section~\ref{sec:pmc} we introduce the definition of set of locally finite perimeter with prescribed mean curvature and prove that a set with $C^1$ boundary and area-stationary under a volume constraint has constant prescribed mean curvature. The main result, Theorem~\ref{th:main}, is proven in Section~\ref{sec:main}. The consequences on the mean curvature and higher regularity for characteristic curves will appear in Section~\ref{sec:mc}.


\section{Preliminaries}

\subsection{Contact sub-Riemannian manifolds}

In this paper we shall consider a $3$-dimensional $C^\infty$ manifold $M$ with contact form $\omega$ and a sub-Riemannian metric $g_\mathcal{H}$ defined on its \emph{horizontal distribution} $\mathcal{H}:=\text{ker}(\omega)$. By definition, $d\omega|_{\mathcal{H}}$ is non-degenerate. We shall refer to $(M,\omega,g_\mathcal{H})$ as a \emph{$3$-dimensional contact sub-Riemannian manifold}. It is well-known that $\omega\wedge d\omega$ is an orientation form in $M$. Since
\[
d\omega(X,Y)=X(\omega(Y))-Y(\omega(X))-\omega([X,Y]),
\]
the horizontal distribution $\mathcal{H}$ is completely non-integrable. The \emph{Reeb vector field} $T$ in $M$ is the only one satisfying
\begin{equation}
\label{eq:reeb}
\omega(T)=1,\qquad \mathcal{L}_T\omega=0,
\end{equation}
where $\mathcal{L}$ is the Lie derivative in $M$.

A canonical contact structure in Euclidean $3$-space $\rr^3$ with coordinates $(x,y,t)$ is given by the contact one-form $\omega_0:=dt+xdy-ydx$. The associated contact manifold is the Heisenberg group $\hh^1$. Darboux's Theorem \cite[Thm.~3.1]{MR1874240} (see also \cite{MR2979606}) implies that, given a point $p\in M$, there exists an open neighborhood $U$ of $p$ and a diffeomorphism $\phi_p$ from $U$ into an open set of $\rr^{3}$ satisfying $\phi_p^*\omega_0=\omega$. Such a local chart will be called a \emph{Darboux chart}. Composing the map $\phi_p$ with a contact transformation of $\hh^1$ also provides a Darboux chart. This implies we can prescribe the image of a point $p\in U$ and the image of a horizontal direction in $T_pM$.

The metric $g_\mathcal{H}$ can be extended to a Riemannian metric $g$ on $M$ by requiring $T$ to be a unit vector orthogonal to $\mathcal{H}$. The Levi-Civita connection associated to $g$ will be denoted by $D$. The integral curves of the Reeb vector field $T$ are \emph{geodesics} of the metric $g$. This property can be easily checked since condition $\mathcal{L}_T\omega=0$ in \eqref{eq:reeb} implies $\omega([T,X])=0$ for any\ $X\in\mathcal{H}$. Hence, for any horizontal vector field $X$, we have
\[
\escpr{X,D_TT}=-\escpr{D_TX,T}=-\escpr{D_XT,T}=0.
\] 
We trivially have $\escpr{T,D_TT}=0$, and so we get $D_TT=0$, as claimed.


The Riemannian volume element in $(M,g)$ will be denoted by $dM$. It coincides with Popp's measure \cite[\S~10.6]{MR1867362}, \cite{MR3108867}. The volume of a set $E\subset M$ with respect to the Riemannian metric $g$ will be denoted by $|E|$. 

\subsection{Torsion and the sub-Riemannian connection}

The following is taken from \cite[\S~3.1.2]{Gaphd}. In a contact sub-Riemannian manifold, we can decompose the endomorphism $X\in TM\rightarrow D_X T$ into its antisymmetric and symmetric parts, which we will denoted by $J$ and $\tau$, respectively,
\begin{equation}\label{eq:jtau}
\begin{split}
2\escpr{J(X),Y}&=\escpr{D_X T,Y}-\escpr{D_Y T,X},\\
2\escpr{\tau(X),Y}&=\escpr{D_X T,Y}+\escpr{D_Y T,X}.
\end{split}
\end{equation}
Observe that $J(X),\tau(X)\in\hhh$ for any vector field $X$, and that $J(T)=\tau(T)=0$. Also note that
\begin{equation}\label{eq:J}
2\escpr{J(X),Y}=-\escpr{[X,Y],T}, \qquad X,Y\in\mathcal{H}.
\end{equation}
We will call $\tau$ the \emph{$($contact$)$ sub-Riemannian torsion}. We note that our $J$ differs from the one defined in \cite[(2.4)]{MR3044134} by the constant $g([X,Y],T)$, but plays the same geometric role and can be easily generalized to higher dimensions, \cite[\S~3.1.2]{Gaphd}. 

Now we define the \emph{$($contact$)$ sub-Riemannian connection} $\nabla$ as the unique metric connection, \cite[eq.~(I.5.3)]{MR2229062}, with torsion tensor $\tor(X,Y)=\nabla_XY-\nabla_YX-[X,Y]$ given by
\begin{equation}\label{def:tor}
\tor(X,Y):=\escpr{X,T}\,\tau(Y)-\escpr{Y,T}\,\tau(X)+2\escpr{J(X),Y}\,T.
\end{equation}
From \eqref{def:tor} and Koszul formula for the connection $\nabla$ it follows that $T$ is a parallel vector field for the sub-Riemannian connection. In particular, their integral curves are geodesics for the connection $\nabla$.

If $X\in\mathcal{H}$, $p\in M$, and $X_p\neq 0$, then $J(X_p)\neq 0$: as $d\omega|_{\mathcal{H}}$ is non-degenerate, there exists $Y\in\mathcal{H}$ such that $d\omega_p(X_p,Y_p)\neq 0$. From \eqref{eq:jtau} we have $2\,g(J(X_p),Y_p)=-g([X,Y]_p,T_p)$, different from $0$ since $\omega_p([X,Y]_p)=-d\omega(X_p,Y_p)\neq 0$.

The standard orientation of $M$ is given by the $3$-form $\omega\wedge d\omega$. If $X_p$ is horizontal, then the basis $\{X_p,J(X_p),T_p\}$ is positively oriented. To check this, observe first that the sign of $(\omega\wedge d\omega)(X,J(X),T)$ equals the sign of $d\omega(X,J(X))$, and we have
\[
d\omega(X,J(X))=-\omega([X,J(X)])=-g([X,J(X)],T)=g(\tor(X,J(X)),T)=2\,g(J(X),J(X))>0.
\]

\subsection{Perimeter and $C^1$ surfaces}

A set $E\subset M$ has \emph{locally finite perimeter} if, for any bounded open set $B\subset M$, we have
\[
P(E,B):=\sup\bigg\{\int_{E\,\cap\,B}\divv U\,dM: U\ \text{horizontal},\ \text{supp}(U)\subset B, ||U||_\infty\le 1\bigg\}<+\infty.
\]
The quantity $P(E,B)$ is the \emph{relative perimeter} of $E$ in $B$.

Assuming $\Sg=\ptl E$ is a surface of class $C^1$, the relative perimeter of $E$ in a bounded open set $B\subset M$ coincides with the sub-Riemannian area of $\Sg\cap B$, given by
\begin{equation}
\label{eq:areaC1}
A(\Sg\cap B)=\int_{\Sg\,\cap\, B}|N_h|\,d\Sg.
\end{equation}
Here $N$ is the Riemannian unit normal to $\Sg$, $N_h$ is the horizontal projection of $N$ to the horizontal distribution, and $d\Sg$ is the Riemannian area measure, all computed with respect the Riemannian metric $g$, see \cite{MR2312336}. The quantity $|N_h|$ vanishes in the \emph{singular set} $\Sg_0\subset\Sg$ of points $p\in\Sg$ where the tangent space $T_p\Sg$ coincides with the horizontal distribution $\mathcal{H}_p$. The \emph{horizontal unit normal} at $p\in\Sg\setminus\Sg_0$ is defined by $(\nuh)_p:=(N_h)_p/|(N_h)_p|$. At every point $p\in\Sg\setminus\Sg_0$, the intersection $\mathcal{H}_p\cap T_p\Sg$ is one-dimensional and generated by the characteristic vector field $Z:=J(\nuh)/|J(\nuh)|$. The vector $S_p$ is defined for $p\in\Sg\setminus\Sg_0$ by $S_p:=g(N_p,T_p)\,(\nuh)_p-|(N_h)_p|\,T_p$. The tangent space $T_p\Sg$, $p\in\Sg\setminus\Sg_0$, is generated by $\{Z_p,S_p\}$.

\subsection{The first variation of the sub-Riemannian perimeter for $C^1$ surfaces}

Given a set $E$ with $C^1$ boundary, we can use the flow $\{\varphi_s\}_{s\in\rr}$ of a vector field $U$ with compact support in $B$ to produce a variation of $\Sg\cap B$. The Riemannian area formula gives the following expression of the sub-Riemannian area of $\Sg_s:=\varphi_s(\Sg\cap B)$,
\[
A(\Sg_s)=\int_\Sg |N^s_h|\,\Jac(\varphi_s)\,d\Sg,
\]
where $N^s$ is a unit normal to $\Sg_s$. Fix $p\in\Sg\setminus\Sg_0$ and the orthonormal basis $\{e_1,e_2\}=\{Z_p,S_p\}$ in $T_p\Sg$. We consider extensions $E_1$, $E_2$ of $Z_p$, $S_p$, respectively, along the integral curve of $U$ passing through $p$. The vector fields $E_1(s)$, $E_2(s)$ are invariant under the flow of $U$ and generate the tangent plane to $\Sg_s$ at the point $\varphi_s(p)$. The vector $(E_1\times E_2)/|(E_1\times E_2)|$ is normal to $\Sg_s$. Here $\times$ denotes the cross product with respect to a volume form $\eta$ for the metric $g$ inducing the same orientation as $\omega\wedge d\omega$, i.e. $g(w,u\times v)=\eta(w,u,v)$. It is easy to check that $|(E_1\times E_2)|(s)=\Jac(\varphi_s)(p)$, and that
\[
V(p,s):=(E_1\times E_2)_h(s)=\big(\escpr{E_1,T}\,(T\times E_2)-\escpr{E_2,T}\,(E_1\times T)\big)(s).
\]
Hence
\[
A(\Sg_s)=\int_\Sg |V(p,s)|\,d\Sg(p),
\]
and we get
\[
\frac{d}{ds}\bigg|_{s=0} |V(s,p)|=\frac{\escpr{\nabla_{U_p} V,V_p}}{|V_p|}.
\]
Since $\{(\nuh)_p,Z_p,T_p\}$ is positively oriented, observe that $V_p=|(N_h)_p|\,(\nu_h)_p$. On the other hand,
\begin{align*}
\nabla_{U_p} V=\escpr{\nabla_{U_p} E_1,T_p}\,(T_p\times (E_2)_p)-\escpr{\nabla_{U_p}E_2,T_p}\,&((E_1)_p\times T_p)
\\
&-\escpr{(E_2)_p,T_p}\,(\nabla_{U_p} E_1\times T_p),
\end{align*}
and so
\begin{equation*}
\frac{\escpr{\nabla_{U_p}V,V_p}}{|V_p|}=-\escpr{\nabla_{U_p}E_2,T_p}+|(N_h)_p|\,\escpr{\nabla_{U_p} E_1\times T_p,(\nuh)_p}.
\end{equation*}
Since
\begin{align*}
\escpr{\nabla_{U_p}E_2,T_p}&=\escpr{\nabla_{(E_2)_p}U+\tor(U_p,(E_2)_p),T_p}
\\
&=S_p(\escpr{U,T})+\escpr{\tor(U_p,S_p),T_p}
\\
&=S_p(\escpr{U,T})+2\,\escpr{J(U_p),S_p},
\end{align*}
and
\begin{align*}
\escpr{\nabla_{U_p} E_1\times T_p,(\nuh)_p}&=\escpr{(\nabla_{(E_1)_p}U+\tor(U_p,(E_1)_p))\times T_p,(\nuh)_p}
\\
&=\eta((\nuh)_p,\nabla_{(E_1)_p}U+\tor(U_p,(E_1)_p),T_p)
\\
&=+\escpr{\nabla_{Z_p}U+\tor(U_p,Z_p),Z_p}
\\
&=+\escpr{\nabla_{Z_p}U,Z_p}+\escpr{U_p,T_p}\,\escpr{\tau(Z_p),Z_p},
\end{align*}
we conclude that the first variation of the sub-Riemannian perimeter is given by
\begin{equation}
\label{eq:1starea}
\begin{split}
\frac{d}{ds}\bigg|_{s=0} A(\Sg_s)=\int_{\Sg\,\cap\,B}\big\{-S(g(U,T))-2\,g(J(U),S)&+|N_h|\,g(\nabla_ZU,Z)
\\
&+|N_h|\,\escpr{U,T}\,\escpr{\tau(Z),Z}\big\}\,d\Sg.
\end{split}
\end{equation}
This formula was obtained in \cite[Lemma~3.4]{MR3044134}.

\section{Sets with prescribed mean curvature}
\label{sec:pmc}

The reader is referred to \cite[(12.32) and Remark~17.11]{MR2976521} for background and references in the Euclidean case. Consider a domain $\Om\subset M$, and a function $f:\Om\to\rr$. We shall say that a set of locally finite perimeter $E\subset\Om$ has \emph{prescribed mean curvature $f$ on $\Om$} if, for any bounded open set $B\subset\Om$, $E$ is a critical point of the functional
\begin{equation}
\label{eq:a-hv}
P(E,B)-\int_{E\cap B} f,
\end{equation}
where $P(E,B)$ is the relative perimeter of $E$ in $B$, and the integral on $E\cap B$ is computed with respect to the canonical Popp's measure on $M$, see \cite{MR1867362} and \cite{MR3108867}. The admissible variations for this problem are the flows induced by vector fields with compact support in $B$.

If $\Sg=\ptl E$ is a surface of class $C^1$ in $\Om$, then $\Sg$ has prescribed mean curvature $f$ if it is a critical point of the functional
\begin{equation}\label{eq:prescribedfunctional}
A(\Sg\cap B) -\int_{E\cap B} f,
\end{equation}
for any bounded open set $B\subset\Om$.

If $E$ is a critical point of the relative perimeter $P(E,B)$ in any bounded open set $B\subset\Om$, then $E$ has zero or vanishing prescribed mean curvature.

Assume now that $E\subset\Om$ is a set of locally finite perimeter with $C^1$ boundary $\Sg$, and that $E$ is a \emph{critical point of the perimeter under a volume constraint}. This means $(d/ds)_{s=0} A(\varphi_s(\Sg\cap B))=0$ for any flow associated to a vector field with compact support in $\Om$ satisfying $(d/ds)_{s=0} |\varphi_s(E\cap B)|=0$. If the perimeter of $E$ in $\Om$ is positive, then there exists a (horizontal) vector field $U_0$ with compact support in $\Om$ so that $\int_{E\cap\Om}\divv U_0\,dM>0$. By the Divergence Theorem,
\[
\int_{\Sg\cap\Om} g(U_0,N)\,d\Sg\neq 0,
\]
where $N$ is the outer normal to $E$. Let $\{\psi_s\}_{s\in\rr}$ be the flow associated to the vector field $U_0$ and define
\begin{equation}
\label{eq:defH0}
H_0:=\frac{d/ds\big|_{s=0}\, A(\psi_s(\Sg))}{d/ds\big|_{s=0}|\psi_s(E)|}.
\end{equation}
Let $B\subset \Om$ be a bounded open subset and $W$ a vector field with compact support in $B$ and associated flow $\{\varphi_s\}_{s\in\rr}$. Choose $\la\in\rr$ so that $W-\la U_0$ satisfies
\[
\frac{d}{ds}\bigg|_{s=0}|\varphi_s(E)|-\la\,\frac{d}{ds}\bigg|_{s=0}|\psi_s(E)|=\int_{\Sg} g(W-\la U_0,N)\,d\Sg=0.
\]
Then the flow associated to $W-\la U_0$ preserves the volume of $E\cap(B\cup B_0)$, where $B_0\subset\Om$ is a bounded open set containing $\text{supp}(U_0)$. Let $Q(U)$ be the integral expression in \eqref{eq:1starea}. From our hypothesis and the linearity of \eqref{eq:1starea}, $Q(W-\la U_0)=0$. Hence $Q(W)=\la\,Q(U_0)$. From \eqref{eq:defH0} we get
\[
Q(W)=\la Q(U_0)=\la H_0\frac{d}{ds}\bigg|_{s=0}|\psi_s(E)|=H_0\frac{d}{ds}\bigg|_{s=0}|\varphi_s(E)|,
\]
and so $E$ has (constant) prescribed mean curvature $H_0$.



\section{Main result}
\label{sec:main}

In this Section we shall prove our main result

\begin{theorem}
\label{th:main}
Let $M$ be a $3$-dimensional contact sub-Riemannian manifold, $\Om\subset M$ a domain, and $E\subset\Om$ a set of prescribed mean curvature $f\in C^0(\Om)$ with $C^1$ boundary $\Sg$. Then the characteristic curves in $\Sg$ are of class $C^2$.
\end{theorem}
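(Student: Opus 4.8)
The plan is to read off, from the first variation, a second-order ODE satisfied by each characteristic curve, and then to bootstrap it. First I would record the weak Euler--Lagrange identity. Since $E$ has prescribed mean curvature $f$, it is a critical point of \eqref{eq:prescribedfunctional} in every bounded open set $B\subset\Om$, so combining \eqref{eq:1starea} with the first variation of the weighted volume, $\frac{d}{ds}\big|_{s=0}\int_{\varphi_s(E)\cap B}f\,dM=\int_{\Sigma\cap B}f\,g(U,N)\,d\Sigma$ ($N$ the outer unit normal to $E$), one obtains that for every $C^\infty$ vector field $U$ with compact support in $B$,
\[
\int_{\Sigma\cap B}\bigl\{-S(g(U,T))-2\,g(J(U),S)+|N_h|\,g(\nabla_Z U,Z)+|N_h|\,g(U,T)\,g(\tau(Z),Z)-f\,g(U,N)\bigr\}\,d\Sigma=0.
\]
Substituting formally $U=a\,\nu_h$ here (which would be legitimate if $\nu_h$ were $C^1$) annihilates all but the third and last terms and yields $g(\nabla_Z\nu_h,Z)=f$ on $\Sigma\setminus\Sigma_0$; since $T$ is parallel for $\nabla$ and $\{\nu_h,Z,T\}$ is orthonormal along a characteristic curve $\Gamma$, this is equivalent to the prescribed geodesic curvature equation
\[
\nabla_{\dot\Gamma}\dot\Gamma=-f(\Gamma)\,\nu_h.
\]

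The substance of the proof is to establish this equation rigorously, with $\Sigma$ merely of class $C^1$, by \emph{localizing} the identity along $\Gamma$. Fix a characteristic curve $\Gamma\subset\Sigma\setminus\Sigma_0$ parametrized by sub-Riemannian arc length and a point $p_0\in\Gamma$; after passing to a Darboux chart adapted to $p_0$ and to the horizontal direction $Z_{p_0}$, one tests the identity with vector fields $U=\phi\,Y$, where $Y$ is \emph{horizontal} --- so that $g(Y,T)\equiv0$ and the first and fourth terms drop, leaving $|N_h|\,g(\nabla_Z U,Z)$ as the only first-order term --- and $\phi$ is supported in a thin neighborhood of $\Gamma$ in $\Sigma$ which, suitably normalized, converges to a line mass on $\Gamma$ as the neighborhood shrinks. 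Passing to the limit one is led to the distributional ODE $\nabla_{\dot\Gamma}\dot\Gamma=-f(\Gamma)\,\nu_h$ along $\Gamma$ --- equivalently, $g(\nabla_{\dot\Gamma}\dot\Gamma,Y)=-f(\Gamma)\,g(\nu_h,Y)$ for every horizontal $Y$; for $f\equiv0$ this is the ``horizontal geodesic'' conclusion of \cite[Thm.~3.5]{gr-aim}.

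Granted the distributional ODE, the bootstrap is routine. The right-hand side $-f(\Gamma)\nu_h$ is continuous along $\Gamma$, since $f\in C^0(\Om)$ and $\nu_h$ is continuous on the $C^1$ surface $\Sigma$; as $\Gamma$ is already $C^1$ and the connection coefficients of $\nabla$ in the chart are smooth, $\nabla_{\dot\Gamma}\dot\Gamma$ equals $\ddot\Gamma$ plus a continuous function of the parameter, so the ODE forces $\ddot\Gamma\in C^0$, i.e. $\Gamma\in C^2$. For the higher regularity of Proposition~\ref{prop:C^k} one uses in addition that $\nu_h$ is a \emph{smooth} function of $\dot\Gamma$: from $\dot\Gamma=Z=J(\nu_h)/|J(\nu_h)|$ and the observation after \eqref{def:tor} that $J$ restricts to a smooth bundle isomorphism of $\mathcal H$, one gets $\nu_h=J^{-1}(\dot\Gamma)/|J^{-1}(\dot\Gamma)|$, so the ODE closes up and can be differentiated. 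In $\hh^1$ one has $\tau=0$ and, in Darboux coordinates, $\Gamma$ projects to a curve of prescribed curvature $f$ --- arcs of circles when $f$ is constant.

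I expect the main obstacle to be the localization step. Since $\Sigma$ is only $C^1$, the characteristic field $Z$ is merely continuous and its integral curves do not foliate $\Sigma\setminus\Sigma_0$ in a $C^1$ fashion, so the surface integral cannot be sliced along characteristic curves by a smooth change of variables. Moreover, after cutting off transversally to $\Gamma$, the term $|N_h|\,g(\nabla_Z U,Z)$ produces a contribution in which the transversal derivative of the cut-off is paired with the transversal component of $Z$ --- a quantity that vanishes on $\Gamma$ but, a priori, only continuously --- so controlling this limit appears to require first establishing a preliminary regularity of the characteristic field (e.g. that a suitable surface divergence of $Z$ is continuous) before the clean equation $\nabla_{\dot\Gamma}\dot\Gamma=-f(\Gamma)\nu_h$, and hence the bootstrap, can be carried out. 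Making this argument precise --- and checking that it yields the ODE for \emph{every} characteristic curve --- is the heart of the matter.
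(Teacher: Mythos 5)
Your strategy---localize the first variation along a characteristic curve and extract an equation whose right-hand side is continuous---is indeed the strategy of the paper, but the proposal stops exactly where the proof begins: you explicitly defer the localization step as ``the heart of the matter,'' and that step is the entire content of the argument. The paper resolves it by three devices that your outline is missing. First, instead of testing with ambient vector fields, it writes $\Sg$ near $p$ as an \emph{intrinsic graph} $f_u(x,t)=(x,u(x,t),t-xu(x,t))$ over the vertical plane $y=0$ and varies the graphing function, so the weak Euler--Lagrange identity becomes $\int_D\bigl(Kv+M(v_x+2uv_t+2vu_t)\bigr)\,dxdt=0$ with $M=g(Z,Y)\circ f_u$ and $K$ continuous. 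Second, your worry that the characteristic curves ``do not foliate $\Sg\setminus\Sg_0$ in a $C^1$ fashion'' is precisely what this parameterization dissolves: the projected characteristic ODE is $t'=2u(s,t)$, whose right-hand side involves only $u$ (not its derivatives), hence is $C^1$; its solutions $t_\eps$ are unique, depend differentiably on the initial condition via \eqref{eq:dteps}, and give a genuine $C^1$ change of variables $(\xi,\eps)\mapsto(\xi,t_\eps(\xi))$ with positive Jacobian $\ptl t_\eps/\ptl\eps$. Third, the genuinely problematic term is not the one you identify (the derivative of the cut-off along $Z$ is an exact $\ptl/\ptl\xi$ in foliated coordinates) but the zeroth-order term $2vu_t$, which carries the merely continuous transversal derivative of $u$; the paper kills it by inserting the difference-quotient test function $\tilde{\varphi}h/(t_{\eps+h}-t_\eps)$ and letting $h\to0$, after which the identity collapses to $\int(K\tilde\varphi+M\,\ptl\tilde\varphi/\ptl\xi)\,d\xi\,d\eps=0$ and can be concentrated on a single leaf with $\eta_\rho(\eps)\psi(\xi)$.

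A further structural difference: the paper does not derive your second-order distributional ODE $\nabla_{\dot\Ga}\dot\Ga=-f(\Ga)\nu_h$ and bootstrap it. It obtains only the \emph{first-order} weak identity $\int\bigl(K(0,\xi)\psi+M(0,\xi)\psi'\bigr)d\xi=0$ along the curve and applies the du Bois--Reymond lemma (Lemma~\ref{lem:c1}) to conclude that $M(0,\xi)=\escpr{Z,Y}|_\Ga$ is $C^1$; the unit-length relation \eqref{eq:ZX} then gives $\escpr{Z,X}|_\Ga\in C^1$, so the tangent $Z$ is $C^1$ along $\Ga$ and $\Ga\in C^2$. This is weaker input than your proposed ODE but suffices, and it avoids ever having to make sense of $\nabla_Z\nu_h$ before the regularity is established. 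Your closing bootstrap remarks (for Proposition~\ref{prop:C^k}) are consistent with what the paper does later, but as a proof of Theorem~\ref{th:main} the proposal has a genuine gap: the limiting argument producing the one-dimensional identity is asserted, not performed, and the obstruction you correctly sense there is not the one you diagnose.
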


\begin{proof}
Given any point $p\in\Sg\setminus\Sg_0$, consider a Darboux chart $(U_p,\phi_p)$ such that $\phi_p(p)=0$. The metric $g_{\mathcal{H}}$ can be described in this local chart by the matrix of smooth functions
\[
G=\begin{pmatrix}
g_{11} & g_{12}
\\
g_{21} & g_{22}
\end{pmatrix}
=
\begin{pmatrix}
g(X,X) & g(X,Y)
\\
g(Y,X) & g(Y,Y)
\end{pmatrix}.
\]
After a Euclidean rotation around the $t$-axis, which is a contact transformation in $\hh^1$ \cite[p.~640]{MR2435652}, we may assume there exists an open neighborhood $B\cap\Sg$ of $p\in \Sg\setminus\Sg_0$, where $B\subset\hh^1$ is an open set containing $p$, so that $B\cap\Sg$ is the intrinsic graph $G_u$ of a $C^1$ function $u:D\to\rr$ defined on a domain $D$ in the vertical plane $y=0$. We can also assume that $E\cap B$ is the subgraph of $u$. The graph $G_u$ can be parameterized by the map $f_u:D\to\rr^3$ defined by
\[
f_u(x,t):=(x,u(x,t),t-xu(x,t)),\qquad (x,t)\in D.
\]
The tangent plane to any point in $G_u$ is generated by the vectors
\begin{align*}
\tfrac{\ptl}{\ptl x}&\mapsto (1,u_x,-u-xu_x)=X+u_xY-2uT,
\\
\tfrac{\ptl}{\ptl t}&\mapsto (0,u_t,1-xu_t)=u_tY+T,
\end{align*}
and so the characteristic direction is given by $Z=\widetilde{Z}/|\widetilde{Z}|$, where
\[
\widetilde{Z}=X+(u_x+2uu_t)Y.
\]

If $\ga(s)=(x(s),t(s))$ is a $C^1$ curve in $D$, then
\[
\Ga(s)=(x(s),u(x(s),t(s)),t(s)-x(s)u(x(s),t(s)))\subset G_u
\]
is also $C^1$, and so
\[
\Ga'(s)=x'\,(X+u_xY-2uT)+t'\,(u_tY+T)=x'X+(x'u_x+t'u_t)Y+(t'-2ux')T.
\]
In particular, horizontal curves in $G_u$ satisfy the ordinary differential equation $t'=2ux'$. Since $u\in C^1(D)$, we have uniqueness of characteristic curves through any given point in $G_u$.

A unit normal vector to $\Sg$ is given by $\widetilde{N}/|\widetilde{N}|$, where
\[
\widetilde{N}=(X+u_xY-2uT)\times (u_tY+T).
\]
Here $\times$ is the cross product with respect to the Riemannian metric $g$ and a given volume form $\eta$ chosen so that $\eta(X,Y,T)>0$. Hence $g(w,u\times v)=\eta(w,u,v)$. If $\{e_1,e_2,e_3\}$ is an orthonormal basis so that $\eta(e_1,e_2,e_3)=1$ and $A$ is the matrix whose columns are the coordinates of $X$, $Y$, $T$ in the basis $\{e_1,e_2,e_3\}$, then $\eta(X,Y,T)=\det(A)$. On the other hand, as
\[
A^tA=\begin{pmatrix}
g_{11} & g_{12} & 0
\\
g_{21} & g_{22} & 0
\\
0 & 0 & 1
\end{pmatrix},
\]
we get $\det(A)^2=\det(G)$. Since $\det(A)>0$ we obtain $\det(A)=\det(G)^{1/2}$ and so
\[
\eta(X,Y,T)=\det(G)^{1/2}.
\]
Let $E_1=X+u_xY-2uT$, $E_2=u_tY+T$. We compute the scalar product of $\widetilde{N}=E_1\times E_2$ with $X$, $Y$, $T$ to obtain
\begin{align*}
g(X,E_1\times E_2)&=\eta(X,E_1,E_2)=\det
{\arraycolsep=0.3\arraycolsep\begin{pmatrix}
1 & 1 & 0
\\
0 & u_x & u_t
\\
0 & -2u & 1
\end{pmatrix}}
\,\eta(X,Y,T)=(u_x+2uu_t)\det(G)^{1/2}.
\\
g(Y,E_1\times E_2)&=\eta(Y,E_1,E_2)=\det
{\arraycolsep=0.3\arraycolsep\begin{pmatrix}
0 & 1 & 0
\\
1 & u_x & u_t
\\
0 & -2u & 1
\end{pmatrix}}
\,\eta(X,Y,T)=-\det(G)^{1/2}.
\\
g(T,E_1\times E_2)&=\eta(T,E_1,E_2)=\det
{\arraycolsep=0.3\arraycolsep\begin{pmatrix}
0 & 1 & 0
\\
0 & u_x & u_t
\\
1 & -2u & 1
\end{pmatrix}}
\,\eta(X,Y,T)=u_t \det(G)^{1/2}.
\end{align*}
Since $g(Y,E_1\times E_2)<0$ and $E\cap B$ is the subgraph of $u$, the vector field $E_1\times E_2$ points into the interior of $E$. If $\widetilde{N}=E_1\times E_2=\alpha X+\beta Y+\gamma T$, then
\[
\begin{pmatrix}
g_{11} & g_{12} & 0
\\
g_{21} & g_{22} & 0
\\
0 & 0 & 1
\end{pmatrix}
\begin{pmatrix}
\alpha \\ \beta \\ \gamma
\end{pmatrix}=\det(G)^{1/2}
\begin{pmatrix}
u_x+2uu_t \\ -1 \\ u_t
\end{pmatrix},
\]
whence
\begin{equation}
\label{eq:Ntilde}
\begin{split}
\begin{pmatrix} \alpha \\ \beta \end{pmatrix}&=\det(G)^{1/2}\, G^{-1}
\begin{pmatrix} u_x+2uu_t \\ -1 \end{pmatrix},
\\
\gamma&=\det(G)^{1/2} u_t.
\end{split}
\end{equation}

Let us compute now the sub-Riemmanian area of the intrinsic graph $G_u$. It is easy to check that $d\Sg=|E_1\times E_2|\,dxdt$, i.e., that $\Jac(f_u)=|E_1\times E_2|$. Since $|N_h|=|(E_1\times E_2)_h|/|E_1\times E_2|$, then using \eqref{eq:Ntilde} and the explicit expression of the inverse matrix $G^{-1}$ we get
\begin{align*}
|N_h|\,\Jac(f_u)&=|(E_1\times E_2)_h|=\bigg(\big(\alpha\ \ \beta\big)\, G\, 
\begin{pmatrix} \alpha \\ \beta \end{pmatrix}\bigg)^{\frac{1}{2}}
\\
&=\big((g_{22}\circ f_u)(u_x+2uu_t)^2+2\,(g_{12}\circ f_u)(u_x+2uu_t)+(g_{11}\circ f_u)\big)^{1/2}.
\end{align*}
Finally, from \eqref{eq:areaC1} we obtain
\begin{equation}
\label{eq:areagraph}
A(G_u)=\int_D \big(g_{22}(u_x+2uu_t)^2+2g_{12}(u_x+2uu_t)+g_{11}\big)^{1/2}dxdt,
\end{equation}
where, by abuse of notation, we have written $g_{ij}$ instead of the cumbersome notation $(g_{ij}\circ f_u)$.

Now we consider variations of $G_u$ by graphs of the form $s\mapsto u+sv$, where $v\in C_0^\infty(D)$ and $s$ is a real parameter close to $0$. This variation is obtained by applying the flow associated to the vector field $\tilde{v}Y$ to the graph $G_u$. The function $\tilde{v}$ is obtained by extending $v$ to be constant along the integral curves of the vector field $Y$, and multiplying by an appropriate function with compact support equal to $1$ in a neighborhood of $\Sg$.

When $F$ is a function of $(x,y,t)$, we have
\[
\frac{d}{ds}\bigg|_{s=0} (F\circ f_{u+sv})(x,t)=\bigg(\frac{\ptl F}{\ptl y}-x\frac{\ptl F}{\ptl t}\bigg)_{f_u(x,t)}\,v(x,t)=Y_{f_u(x,t)} (F)\, v(x,t).
\]
So we get 
\begin{equation*}
\frac{d}{ds}\bigg|_{s=0} A(G_{u+sv})
=\int_D \big(K_1v+M\,(v_x+2uv_t+2vu_t)\big)\,dxdt,
\end{equation*}
where the functions $K_1$ and $M$ are given by
\[
K_1=\frac{1}{2}\frac{Y(g_{22})(u_x+2uu_t)^2+2 Y(g_{12})(u_x+2uu_t)+Y(g_{11})}{(g_{22}(u_x+2uu_t)^2+2g_{12}(u_x+2uu_t)+g_{11})^{1/2}},
\]
and
\[
M=\frac{g_{22}(u_x+2uu_t)+g_{12}}{(g_{22}(u_x+2uu_t)^2+2g_{12}(u_x+2uu_t)+g_{11})^{1/2}}.
\]
Observe that the functions $K_1$ and $M$ are continuous. Since
\[
Z=\frac{X+(u_x+2uu_t)\,Y}{(g_{22}(u_x+2uu_t)^2+2g_{12}(u_x+2uu_t)+g_{11})^{1/2}},
\]
the function $M$ coincides with $g(Z,Y)\circ f_u$. A straightforward computation implies
\[
1=|Z|^2=\det(G)^{-1}\big(g_{22}\escpr{Z,X}^2-2g_{12}\escpr{Z,X}\escpr{Z,Y}+g_{11}\escpr{Z,Y}^2\big)
\]
and so
\begin{equation}
\label{eq:ZX}
\escpr{Z,X}=\frac{g_{12}g(Z,y)\pm (\det(G)(g_{22}-\escpr{Z,Y}^2))^{1/2}}{g_{22}}.
\end{equation}
By Schwarz's inequality $\escpr{Z,Y}^2\le \escpr{Y,Y}=g_{22}$. Inequality is strict since otherwise $Y$ and $Z$ would be collinear. Hence $\escpr{Z,X}$ has the same regularity as $\escpr{Z,Y}$ by \eqref{eq:ZX}.

The subgraph of $u$ can be parameterized by the map $(x,t,s)\to (x,s,t-xs)$. The Jacobian of this map is easily seen to be equal to $\det(G)$. Hence
\[
\frac{d}{ds}\bigg|_{s=0} \int_{\text{subgraph}\,G_{u+sv}} f=\int_D f\,\det(G)\,v\,dxdt.
\]
If $\Sg$ has prescribed mean curvature $f$, this implies
\begin{equation}\label{eq:1stvariation smooth test function}
\int_D \big(Kv+M\,(v_x+2uv_t+2vu_t\big)\,dxdt=0,
\end{equation}
for any $v\in C_0^\infty(D)$, where the \emph{continuous} function $K$ is given by $K=K_1-f\,\det(G)$. By Remark \ref{remark:continuos approximation argument} below, \eqref{eq:1stvariation smooth test function} also holds  for any $v\in C^0_0(D)$ for which $v_x+2uv_t$ exists and it  is continuous.

Now we proceed as in the proof of Theorem~3.5 in \cite{gr-aim}. Assume the point $p\in G_u$ corresponds to the point $(a,b)$ in the $xt$-plane. The  curve $s\mapsto (s,t(s))$ is (a reparameterization of the projection of) a characteristic curve if and only if the function $t(s)$ satisfies the ordinary differential equation $t'(s)=u(s,t(s))$. For $\eps$ small enough, we consider the solution $t_\eps$ of equation $t_\eps'(s)=2u(s,t_\eps(s))$ with initial condition $t_\eps(a)=b+\eps$, and define $\ga_\eps(s):=(s,t_\eps(s))$, with $\ga=\ga_0$. We may assume that, for small enough $\eps$, the functions $t_\eps$ are defined in the interval $[a-r,a+r]$ for some $r>0$. The function $\ptl t_\eps/\ptl\eps$ satisfies
\begin{equation}
\label{eq:dteps}
\bigg(\frac{\ptl t_\eps}{\ptl\eps}\bigg)'(s)=2u_t(s,t_\eps(s))\,\bigg(\frac{\ptl t_\eps}{\ptl\eps}\bigg)(s),\qquad \frac{\ptl t_\eps}{\ptl\eps}(a)=1.
\end{equation}
where $'$ is the derivative with respect to the parameter $s$.

We consider the parameterization
\[
F(\xi,\eps):= (\xi,t_\eps(\xi))=(s,t)
\]
near the characteristic curve through $(a,b)$. The jacobian of this parameterization is given by
\[
\det\begin{pmatrix}
1 & t'_\eps
\\
0 & \ptl t_\eps/\ptl \eps
\end{pmatrix}=\frac{\ptl t_\eps}{\ptl\eps},
\]
which is positive because of the choice of initial condition for $t_\eps$ and the fact that the curves $\ga_\eps(s)$ foliate a neighborhood of $(a,b)$. Any function $\varphi$ can be considered as a function of the variables $(\xi,\eps)$ by making $\tilde{\varphi}(\xi,\eps):=\varphi(\xi,t_\eps(\xi))$. Changing variables, and assuming the support of $\varphi$ is contained in a sufficiently small neighborhood of $(a,b)$, we can express the integral \eqref{eq:areagraph} as
\[
\int_{I}\bigg\{\int_{a-r}^{a+r}\bigg(K\tilde{\varphi}+M\,\bigg(\frac{\ptl\tilde{\varphi}}{\ptl\xi}+2\tilde{\varphi} \tilde{u}_t\bigg)\bigg)\,\frac{\ptl t_\eps}{\ptl\eps}\,d\xi\bigg\}\,d\eps,
\]
where $I$ is a small interval containing $0$. Instead of $\tilde{\varphi}$, we can consider the function $\tilde{\varphi} h /( t_{\eps+h}-t_\eps)$, where $h$ is a sufficiently small real parameter. We get that 
\[
\frac{ \ptl}{\ptl\xi}\bigg(\frac{h\, \tilde{\varphi}}{ t_{\eps+h}-t_\eps}\bigg)=\frac{\ptl\tilde{\varphi}}{\ptl\xi}\cdot \frac{h}{t_{\eps+h}-t_\eps}-2\tilde{\varphi}\cdot \frac{\tilde{u}(\xi, \eps+h)-\tilde{u}(\xi,\eps)}{t_{\eps+h}-t_\eps}\cdot \frac{h}{t_{\eps+h}-t_\eps}
\]
tends to 
\[
\frac{\ptl\tilde{\varphi}/\ptl\xi}{\ptl t_\eps/\ptl\eps}-\frac{2\tilde{\varphi}\tilde{u}_t}{\ptl t_\eps/\ptl\eps},
\]
when $h\rightarrow 0$. So using that $G_u$ is area-stationary we have that
\[
\int_{I}\bigg\{\int_{a-r}^{a+r} \frac{h}{t_{\eps+h}-t_\eps}\bigg(K\,  \tilde{\varphi}+M\,\bigg(\frac{\ptl\tilde{\varphi}}{\ptl\xi}\cdot +2  \tilde{\varphi}\cdot \bigg(\tilde{u}_t-\frac{\tilde{u}(\xi, \eps+h)-\tilde{u}(\xi,\eps)}{t_{\eps+h}-t_\eps}\bigg)\bigg)\bigg)\,\frac{\ptl t_\eps}{\ptl\eps}\,d\xi\bigg\}\,d\eps
\]
vanishes. Furthermore, letting $h\rightarrow 0$ we conclude
\[
\int_{I}\bigg\{\int_{a-r}^{a+r}\bigg(K\tilde{\varphi}+M\,\frac{\ptl\tilde{\varphi}}{\ptl\xi}\bigg)\,d\xi\bigg\}\,d\eps=0.
\]


Let now $\eta:\rr\to\rr$ be a positive function with compact support in the interval $I$ and consider the family $\eta_\rho(x):=\rho^{-1}\eta(x/\rho)$. Inserting a test function of the form $\eta_\rho(\eps)\psi(\xi)$, where $\psi$ is a $C^\infty$ function with compact support in $(a-r,a+r)$, making $\rho\to 0$, and using that $G_u$ is area-stationary we obtain
\[
\int_{a-r}^{a+r} \big(K(0,\xi)\,\psi(\xi)+M(0,\xi)\,\psi'(\xi)\big)\,d\xi=0
\]
for any $\psi\in C^\infty_0((a-r,a+r))$. By Lemma~\ref{lem:c1}, the function $M(0,\xi)$, which is the restriction of $\escpr{Z,Y}$ to the characteristic curve, is a $C^1$ function on the curve. By equation \eqref{eq:ZX}, the restriction of $\escpr{Z,X}$ to the characteristic curve is also $C^1$. This proves that horizontal curves are of class $C^2$.
\end{proof}

\begin{lemma}
\label{lem:c1}
Let $I\subset\rr$ be an open interval, $k$, $m\in C^0(I)$, and $K\in C^1(I)$ be a primitive of $k$. Assume
\begin{equation}
\label{eq:inteq}
\int_I k\psi+m\psi'=0,
\end{equation}
for any $\psi\in C_0^\infty(I)$. Then the function $-K+m$ is constant on $I$. In particular, $m\in C^1(I)$.
\end{lemma}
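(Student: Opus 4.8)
The plan is to show that \eqref{eq:inteq} forces the distributional derivative of $-K+m$ to vanish, and then invoke the standard fact that a distribution with zero derivative on an interval is a constant. First I would rewrite the integral identity by integrating the term $\int_I k\psi$ by parts: since $K$ is a $C^1$ primitive of $k$, for any $\psi\in C_0^\infty(I)$ we have $\int_I k\psi = -\int_I K\psi'$, so \eqref{eq:inteq} becomes $\int_I (m-K)\psi' = 0$ for all test functions $\psi$. In other words, the continuous function $m-K$ has vanishing distributional derivative on $I$.

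Next I would conclude that $m-K$ is constant. This is the elementary du~Bois-Reymond lemma: if a locally integrable (here, continuous) function $h$ satisfies $\int_I h\psi'=0$ for all $\psi\in C_0^\infty(I)$, then $h$ equals a constant almost everywhere, hence everywhere if $h$ is continuous. For completeness one can give the short classical argument: fix $\psi_0\in C_0^\infty(I)$ with $\int_I\psi_0=1$; any $\varphi\in C_0^\infty(I)$ can be written as $\varphi = \big(\int_I\varphi\big)\psi_0 + \psi'$ where $\psi(x):=\int_{-\infty}^x\big(\varphi-(\int_I\varphi)\psi_0\big)$ has compact support in $I$; applying the hypothesis to $\psi$ gives $\int_I h\varphi = \big(\int_I\varphi\big)\int_I h\psi_0$, i.e.\ $\int_I(h-c)\varphi=0$ for all $\varphi$ with $c:=\int_I h\psi_0$, whence $h\equiv c$ by the fundamental lemma of the calculus of variations together with continuity of $h$.

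Therefore $-K+m$ is constant on $I$, which is the assertion. The final sentence $m\in C^1(I)$ follows immediately: $m = K + (\text{const})$, and $K\in C^1(I)$ by hypothesis, so $m$ inherits the $C^1$ regularity.

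There is essentially no obstacle here; the only mild care needed is the integration by parts, which is legitimate because $K\in C^1$ and $\psi\in C_0^\infty$ has compact support, so no boundary terms appear. The lemma is a packaging of the du~Bois-Reymond / fundamental-lemma-of-calculus-of-variations argument, and the point of stating it separately is that in the proof of Theorem~\ref{th:main} it is applied with $k = -K(0,\cdot)$ (continuous), $m = M(0,\cdot)$ (continuous a priori), and $K$ a $C^1$ primitive of $k$ along the characteristic curve, yielding that $M(0,\cdot)=\escpr{Z,Y}$ restricted to the curve is $C^1$.
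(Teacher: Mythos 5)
Your proof is correct and follows essentially the same route as the paper: integrate the $k\psi$ term by parts to reduce \eqref{eq:inteq} to $\int_I(m-K)\psi'=0$, then conclude constancy of $m-K$ by the du~Bois-Reymond lemma (which the paper simply asserts and you spell out). The only quibble is a sign slip in your closing remark about the application ($k$ is $K(0,\cdot)$, not $-K(0,\cdot)$), which does not affect the proof of the lemma itself.
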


\begin{proof}
Since $(K\psi)'=k\psi+K\psi'$, integrating by parts we see that \eqref{eq:inteq} is equivalent to
\[
\int_I \big(-K+m\big)\,\psi'=0,
\]
for any $\psi\in C^\infty_0(I)$. This implies that $-K+m$ is a constant function on $I$.
\end{proof}

\begin{remark}\label{remark:continuos approximation argument} Let us check that $\eqref{eq:1stvariation smooth test function}$ holds for any $w\in C^0_0(D)$ such that $w_x+2uw_t$ exists and is continuous. 
Let us consider a sequence $w_j\in C_0^\infty(D)$, where $w_j=\rho_j \ast w$, and $\rho_j$ denote the standard mollifiers. We have that $w_j$ converges to $w$ and that $(w_j)_x+2u(w_j)_t$ converges to $w_x+2uw_t$ uniformly on compact subsets of $D$, for $j\rightarrow \infty$.  We conclude
\[
0=\lim\limits_{j\rightarrow\infty} \int_D \big(K(w_j)+M\,((w_j)_x+2u(w_j)_t+2w_ju_t\big)\,dxdt= \int_D \big(Kw+M\,(w_x+2uw_t+2wu_t\big)\,dxdt,
\]
thus proving the claim.
\end{remark}

\begin{remark}
In case $M$ is the Heisenberg group $\hh^1$, $G$ is the identity matrix and the expression for the sub-Riemannian area of the graph $G_u$ given in \eqref{eq:areagraph} reads
\[
A(G_u)=\int_D ((u_x+2uu_t)^2+1)^{1/2}\,dxdt,
\]
a well-known formula obtained in \cite{MR2223801}.
\end{remark}

\section{The mean curvature for $C^1$ surfaces}
\label{sec:mc}

Given a surface $\Sg\subset M$ of class $C^1$ such that the vector fields $Z$ and $\nu_h$ are of class $C^1$ along the characteristic curves in $\Sg\setminus\Sg_0$, we define the \emph{mean curvature} of $\Sg$ at $p\in \Sg\setminus\Sg_0$ by
\begin{equation}
\label{eq:Hstrongsense}
\big(\divv_\Sg^h(\nuh)\big)(p):= -g(\n_Z \nu_h,Z)(p).
\end{equation}
This is the standard definition of mean curvature for $C^2$ surfaces, see e.g. \cite[(3.8)]{MR3044134} and the references there. The mean curvature is usually denoted by $H$. The mean curvature depends on the choice of $\nuh$. In case $\Sg$ is the boundary of a set $E$, we shall always choose $N$ as the inner normal and $\nuh=N_h/|N_h|$.

Using the regularity  Theorem~\ref{th:main} we get

\begin{proposition}
\label{prop:1stareaStrong}
Let $\Omega\subset M$ be a domain and $E\subset \Omega$ a set of prescribed mean curvature $f\in C^0(\Omega)$ with $C^1$ boundary $\Sg$ with $H\in L^1_{loc}(\Sg)$. Then the first variation of the functional \eqref{eq:prescribedfunctional} induced by a vector field $U\in C^1_0(\Omega)$ is given by 
\begin{equation}
\label{eq:1stvariationprescribedfunctional}
\begin{split}
\int_{\Sg}H\, g(U,N) \,d\Sg-\int_\Sg f \,g(U,N)\,d\Sg.
\end{split}
\end{equation}
\end{proposition}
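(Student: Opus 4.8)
The plan is to reduce the general first variation formula \eqref{eq:1starea} to the claimed expression \eqref{eq:1stvariationprescribedfunctional} by rewriting the integrand in terms of the mean curvature $H$, using the regularity of $Z$ and $\nu_h$ along characteristic curves supplied by Theorem~\ref{th:main}. Recall that the functional \eqref{eq:prescribedfunctional} is $A(\Sg\cap B)-\int_{E\cap B}f$, and the second term varies, under the flow of $U$, with derivative $\int_\Sg f\,g(U,N)\,d\Sg$ by the Riemannian divergence theorem (this is computed exactly as in Section~\ref{sec:pmc}, where the subgraph Jacobian was handled). So the whole content is to show that the integral $Q(U)$ of \eqref{eq:1starea} equals $\int_\Sg H\,g(U,N)\,d\Sg$.

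First I would decompose the ambient field $U$ along the moving frame $\{\nu_h, Z, T\}$ on $\Sg\setminus\Sg_0$, writing the relevant components, and split $Q(U)$ into the terms appearing in \eqref{eq:1starea}: the term $-S(g(U,T))$, the torsion terms $-2g(J(U),S)$ and $|N_h|\,g(U,T)\,g(\tau(Z),Z)$, and the term $|N_h|\,g(\nabla_Z U, Z)$. The key manipulation is on the last term: integrating by parts along the characteristic curves (which foliate $\Sg\setminus\Sg_0$, and along which $Z$ is now $C^1$ by Theorem~\ref{th:main}), one moves the derivative off $U$ and onto $Z$, producing $-|N_h|\,g(\nabla_Z Z, U)$ plus a term with $Z(|N_h|)$ and a boundary term that vanishes because $U$ has compact support. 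Then one recognizes, via the standard identities relating $\nabla_Z Z$, $\nabla_Z \nu_h$ and the torsion to the mean curvature $H = -g(\nabla_Z \nu_h, Z)$ — exactly the computation underlying \cite[(3.8)]{MR3044134} and the definition \eqref{eq:Hstrongsense} — that the accumulated horizontal coefficient of $U$ collapses to $H\,|N_h|\,g(U,\nu_h)$, while the coefficients of $g(U,T)$ and (if present) $g(U,Z)$ cancel identically. Since $g(U,N) = |N_h|\,g(U,\nu_h) + g(N,T)\,g(U,T)$ and $N_h = |N_h|\nu_h$, and since on $\Sg$ one has $|N_h|\,g(U,\nu_h)\,d\Sg$ matching $g(U,N)\,d\Sg$ up to the vertical part that has just been shown to cancel, we obtain $Q(U) = \int_\Sg H\,g(U,N)\,d\Sg$.

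I would also note that the singular set $\Sg_0$ causes no trouble: on a $C^1$ surface in a $3$-dimensional contact manifold $\Sg_0$ has vanishing measure with respect to $d\Sg$ (and $|N_h|$, the perimeter density, vanishes there), so all integrals over $\Sg$ are really integrals over $\Sg\setminus\Sg_0$; the hypothesis $H\in L^1_{loc}(\Sg)$ guarantees the right-hand side of \eqref{eq:1stvariationprescribedfunctional} makes sense. The essentially bookkeeping passage from the coordinate first-variation formula to the intrinsic one can be carried out either directly in the $(x,t)$ chart used in the proof of Theorem~\ref{th:main} — where $A(G_u)$, $Z$, $M=g(Z,Y)$ and the characteristic ODE $t' = 2u x'$ are all explicit — or invariantly using \eqref{eq:1starea}; I would favor the invariant route to keep the contact-torsion terms visible.

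The main obstacle I anticipate is the integration by parts along characteristic curves: one must justify foliating $\Sg\setminus\Sg_0$ by the $C^2$ characteristic curves, parameterize the area measure $d\Sg$ adapted to this foliation (a coarea-type splitting $d\Sg = |N_h|^{-1}\,d\ell\,d\mu$ or similar, with $d\ell$ arclength along the curves), and check that the arclength derivative of $Z$ along the curve is exactly the quantity controlled by Theorem~\ref{th:main}, so that $g(\nabla_Z Z, U)$ and the boundary terms are legitimate; getting the transverse measure and the compact-support cancellation of boundary terms right, uniformly as one approaches $\Sg_0$, is the delicate point. Once that is in place, the algebraic identification of the horizontal coefficient with $H|N_h|$ is a routine application of the torsion formula \eqref{def:tor} and the definition of $H$.
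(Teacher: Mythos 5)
Your overall plan---reduce \eqref{eq:1starea} to $\pm\int_\Sg H\,g(U,N)\,d\Sg$ by integration by parts adapted to the characteristic foliation---is the route taken by the reference the paper actually leans on (the paper's own proof is a two-line citation of \cite[Prop.~6.3, Remark~6.4]{MR3044134} for the area term and of \cite[17.8]{MR2976521} for the volume term). But your bookkeeping of the vertical component of $U$ contains a genuine error. Test the claimed cancellation on $U=\psi T$ with $\psi\in C^1_0$: then $J(U)=0$ and $\nabla_Z(\psi T)=Z(\psi)\,T$ is orthogonal to $Z$ (recall $T$ is $\nabla$-parallel), so \eqref{eq:1starea} reduces to $\int_\Sg\{-S(\psi)+|N_h|\,g(\tau(Z),Z)\,\psi\}\,d\Sg$, whereas the proposition asserts the value $\mp\int_\Sg H\,g(N,T)\,\psi\,d\Sg$, which is not zero in general. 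Hence the coefficient of $g(U,T)$ does not ``cancel identically''; it must survive and produce exactly the missing piece $H\,g(N,T)\,g(U,T)$ of $H\,g(U,N)=H\big(|N_h|\,g(U,\nu_h)+g(N,T)\,g(U,T)\big)$. Your argument is internally inconsistent on this point: if the vertical coefficients cancelled, you would end with $\int_\Sg H\,|N_h|\,g(U,\nu_h)\,d\Sg$, which differs from the asserted $\int_\Sg H\,g(U,N)\,d\Sg$ by precisely that vertical term, and your closing sentence identifying the two is not justified.

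The step that generates the vertical contribution is the integration by parts of $-S(g(U,T))$, i.e.\ the identity $\int_\Sg S(\psi)\,d\Sg=\int_\Sg \psi\,\big(|N_h|\,g(\tau(Z),Z)+H\,g(N,T)\big)\,d\Sg$, formally $\divv_\Sg S=-|N_h|\,g(\tau(Z),Z)-H\,g(N,T)$. This is a derivative in the direction $S$, transverse to the characteristic foliation, so it is not reached by your integration by parts along characteristic curves; justifying it when $\Sg$ is only $C^1$ (so that $|N_h|$ and $g(N,T)$ need not be differentiable in the $S$-direction, and $D_SZ$ has no classical meaning) is exactly the content of the cited \cite[Prop.~6.3, Remark~6.4]{MR3044134}, and it is the part your sketch leaves open, in addition to the coarea splitting of $d\Sg$ that you yourself flag as delicate. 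Two smaller points: the hypothesis $H\in L^1_{loc}(\Sg)$ is what makes the limit of the difference quotients integrable, so it should enter where you pass the derivative under the integral; and your signs for both terms are opposite to those obtained in the paper's proof, namely $\frac{d}{ds}\big|_{s=0}A(\Sg_s)=-\int_\Sg H\,g(U,N)\,d\Sg$ and $-\int_\Sg f\,g(U,N)\,d\Sg$ --- this traces to the inner/outer normal convention and should be pinned down explicitly.
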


\begin{proof}
We first observe that formula
\[
\frac{d}{ds}\bigg|_{s=0} A(\Sg_s)=-\int_{\Sg}H\, g(U,N) \,d\Sg
\] 
can be proved as in \cite[Prop.~6.3]{MR3044134}, see also \cite[Remark~6.4]{MR3044134}. On the other hand, it is well-known that  
\[
\frac{d}{ds}\bigg|_{s=0} \bigg(\int_{\phi_s(\Omega)} f \bigg)= -\int_\Sg f \,g(U,N)\, d\Sg,
\]
see e.g. \cite[17.8]{MR2976521}.
\end{proof}

Then we have that the mean curvature $H$ defined in \eqref{eq:Hstrongsense} coincides with the prescribed mean curvature $f$

\begin{corollary} Let $E\subset\Om$ be a set of prescribed mean curvature $f\in C^0(\Omega)$ with $C^1$ boundary $\Sg$ in a domain $\Om\subset M$. Assume $H\in L^1_{loc}(\Sg)$. Then $H(p)=f(p)$ for any $p\in\Sg\setminus\Sg_0$. 
\end{corollary}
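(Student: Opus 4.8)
The plan is to deduce the identity $H(p)=f(p)$ from the two expressions now available for the first variation of the functional \eqref{eq:prescribedfunctional}. On one hand, since $E$ has prescribed mean curvature $f$, the functional $A(\Sg\cap B)-\int_{E\cap B}f$ is by definition a critical point, so its first variation under any flow generated by a vector field $U\in C^1_0(\Omega)$ vanishes identically. On the other hand, Proposition~\ref{prop:1stareaStrong} gives the explicit formula
\[
\int_{\Sg}H\,g(U,N)\,d\Sg-\int_\Sg f\,g(U,N)\,d\Sg
\]
for that same first variation, valid because Theorem~\ref{th:main} guarantees that $\nu_h$ and $Z$ are of class $C^1$ along characteristic curves, hence $H$ is well defined on $\Sg\setminus\Sg_0$, and the hypothesis $H\in L^1_{loc}(\Sg)$ makes the integral meaningful.

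First I would equate the two expressions to conclude that
\[
\int_\Sg \big(H-f\big)\,g(U,N)\,d\Sg=0
\]
for every $U\in C^1_0(\Omega)$. Next I would observe that $H-f$ is continuous on $\Sg\setminus\Sg_0$ (as $f\in C^0$ and $H$ is continuous there by Theorem~\ref{th:main}), and that the singular set $\Sg_0$ is negligible with respect to $d\Sg$ — this follows from the area formula \eqref{eq:areaC1}, since $|N_h|$ vanishes precisely on $\Sg_0$, or alternatively from the standard fact that $\Sg_0$ is contained in a countable union of $C^1$ curves and hence has vanishing $\Sg$-area. The function $g(U,N)$ can be prescribed to be an arbitrary compactly supported continuous function on $\Sg\setminus\Sg_0$ up to approximation: given $\psi\in C^0_0(\Sg\setminus\Sg_0)$ one extends $\psi N$ off $\Sg$ to a $C^1$ (or, by a mollification argument as in Remark~\ref{remark:continuos approximation argument}, suitably approximated) vector field $U$ with $g(U,N)=\psi$ on $\Sg$.

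Then, by the fundamental lemma of the calculus of variations applied on $\Sg\setminus\Sg_0$, the vanishing of $\int_\Sg (H-f)\,\psi\,d\Sg$ for all such $\psi$ forces $H-f\equiv 0$ on $\Sg\setminus\Sg_0$, which is the claim. The only point requiring a little care — the step I expect to be the main obstacle — is the regularity bookkeeping: Proposition~\ref{prop:1stareaStrong} is stated for $U\in C^1_0(\Omega)$, so one must either construct genuinely $C^1$ test vector fields realizing a given continuous $\psi=g(U,N)$ on $\Sg$, or run the same mollification/approximation scheme used in Remark~\ref{remark:continuos approximation argument} to pass from $C^1$ to $C^0$ test data, invoking continuity of $H-f$ and dominated convergence with the $L^1_{loc}$ bound on $H$. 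Once that is in place the conclusion is immediate.
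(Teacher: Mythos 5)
Your proposal is correct and is essentially the argument the paper intends: the corollary is stated as an immediate consequence of Proposition~\ref{prop:1stareaStrong}, obtained by equating the vanishing first variation of \eqref{eq:prescribedfunctional} with the explicit formula $\int_\Sg (H-f)\,g(U,N)\,d\Sg$ and applying the fundamental lemma. The only point to tighten is your claim that $H$ is continuous on $\Sg\setminus\Sg_0$ ``by Theorem~\ref{th:main}'': that theorem only gives continuity of $H$ along each characteristic curve, so to pass from the $d\Sg$-a.e.\ identity to the pointwise one at every $p\in\Sg\setminus\Sg_0$ you should invoke this curvewise continuity together with the characteristic foliation (as in the localization step of the proof of Theorem~\ref{th:main}) rather than joint continuity of $H$ on the surface.
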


Finally we can improve the regularity of Theorem~\ref{th:main} assuming the mean curvature function is more regular. This result is specially useful if we assume that higher order horizontal derivatives of the function $f$ exist and are continuous.

\begin{proposition}
\label{prop:C^k}
Let $E\subset\Om$ be a set of prescribed mean curvature $f\in C^0(\Omega)$ with $C^1$~boundary $\Sg$ in a domain $\Om\subset M$. Assume that $f$ is also $C^k$ in the $Z$-direction, $k\ge 1$. Then the characteristic curves of $\Sg$ are of class $C^{k+2}$ in $\Sg\setminus\Sg_0$. 
\end{proposition}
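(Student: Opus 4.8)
The plan is to set up a bootstrap argument using the ordinary differential equation satisfied by the characteristic curves together with the integral identity extracted in the proof of Theorem~\ref{th:main}. Recall from that proof that, working in a Darboux chart where $\Sg$ is the intrinsic graph $G_u$ of $u\in C^1$, a characteristic curve is (a reparameterization of) $\xi\mapsto(\xi,t(\xi))$ with $t'(\xi)=2u(\xi,t(\xi))$, and that along such a curve one has the one–variable identity $\int (K\,\psi+M\,\psi')\,d\xi=0$ for all test functions $\psi$, where $M=\escpr{Z,Y}\circ f_u$ restricted to the curve, and $K=K_1-f\det(G)$. By Lemma~\ref{lem:c1} this gives $M=K_0+\int k$ along the curve for a primitive of $k:=K|_{\text{curve}}$, so the regularity of $M$ along a characteristic curve is exactly one order better than the regularity of $K$, hence one order better than the worst of $K_1$ and $f\det(G)$ along the $Z$-direction. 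By \eqref{eq:ZX} the same gain holds for $\escpr{Z,X}$, hence for the unit characteristic field $Z$ and therefore for $u_x+2uu_t$ along the curve. Since $t'=2u$, the regularity of $u$ along the characteristic direction controls the regularity of $t$, i.e.\ of the curve itself.

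First I would make the inductive hypothesis precise: assume the characteristic curves are of class $C^{m+1}$ for some $m\ge 1$ (the base case $m=1$ being Theorem~\ref{th:main}), and assume in addition that the restriction of $Z$ (equivalently of $\escpr{Z,Y}$, equivalently of $u_x+2uu_t$) to each characteristic curve is of class $C^m$. I would then inspect $K_1$ and the coefficient $M$: both are built algebraically from the smooth metric coefficients $g_{ij}\circ f_u$, their $Y$-derivatives, and the quantity $p:=u_x+2uu_t$, under a square root whose argument is bounded away from $0$ on $\Sg\setminus\Sg_0$. The key observation is that $Y(g_{ij})\circ f_u$ and $g_{ij}\circ f_u$, when restricted to a characteristic curve $\xi\mapsto F(\xi)=f_u(\xi,t(\xi))$, are compositions of smooth functions with the curve $F$, which under the inductive hypothesis is $C^{m+1}$; hence these restrictions are $C^{m+1}\supset C^m$ along the curve. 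Therefore $K_1|_{\text{curve}}$ is $C^m$ (it depends on $p$, which is $C^m$, and on the $C^{m+1}$ metric terms). The extra hypothesis of the proposition enters exactly here: $f$ is $C^k$ in the $Z$-direction, so if $m\le k$ then $f|_{\text{curve}}$ is $C^m$, and $\det(G)|_{\text{curve}}$ is $C^{m+1}$, so $K|_{\text{curve}}=K_1|_{\text{curve}}-(f\det G)|_{\text{curve}}\in C^m$. Lemma~\ref{lem:c1} then upgrades $M|_{\text{curve}}$ to $C^{m+1}$, hence $p|_{\text{curve}}=(u_x+2uu_t)|_{\text{curve}}$ to $C^{m+1}$ via \eqref{eq:ZX}; and then $t'=2u$ with $u|_{\text{curve}}$ controlled by $p$ gives that $t$, and so the characteristic curve, is $C^{m+2}$. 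This closes the induction and yields, after $k$ steps, that characteristic curves are of class $C^{k+2}$.

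The point requiring care — the main obstacle — is the very first reduction: one must justify that the one-dimensional integral identity $\int(K\psi+M\psi')d\xi=0$, derived in Theorem~\ref{th:main}, holds not just at $\eps=0$ but with the parameterized family structure needed to differentiate transversally, and more importantly that along a single characteristic curve the quantities $K_1$, $M$, $f$ are genuinely compositions with the curve $F$ rather than with the full graph map $f_u$ — i.e.\ that "$C^k$ in the $Z$-direction" is precisely the hypothesis that makes $f\circ F$ of class $C^k$ when $F$ is $C^{k+1}$. I would spell out that $\tfrac{d}{d\xi}(f\circ F)=\escpr{Z,\cdot}$-derivative of $f$ times $|\widetilde Z|$ (the reparameterization factor), so that differentiating $f\circ F$ trades one derivative of the curve for one $Z$-derivative of $f$; this chain-rule accounting is what keeps the two bootstrapping quantities (the order of $F$ and the order of $p$ along $F$) synchronized and prevents a loss of derivatives. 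Once this bookkeeping is set up correctly, the induction itself is routine, with no further analytic input beyond Lemma~\ref{lem:c1} and the non-vanishing of $|N_h|$ on $\Sg\setminus\Sg_0$.
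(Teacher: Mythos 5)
Your argument is essentially correct, but it takes a genuinely different route from the paper. The paper's proof is a short geometric bootstrap: once Theorem~\ref{th:main} and Section~\ref{sec:mc} are in place, $Z$ and $\nu_h$ are $C^1$ along characteristic curves, the mean curvature $H$ is defined and equals $f$, and the curves satisfy the first-order system $\nabla_Z Z=H\,\nu_h$, $\nabla_Z\nu_h=-H\,Z$; differentiating repeatedly along the curve, e.g.\ $\nabla_Z(\nabla_Z Z)=Z(H)\,\nu_h-H^2Z$, each additional derivative of the curve costs exactly one $Z$-derivative of $H=f$, which immediately yields $C^{k+2}$. You instead re-run the variational argument of Theorem~\ref{th:main} inductively in the Darboux chart, gaining one derivative per application of Lemma~\ref{lem:c1} to the identity $\int(K\psi+M\psi')\,d\xi=0$. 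This works: the inversion $M=\escpr{Z,Y}\mapsto p=u_x+2uu_t$ is legitimate since $\partial\escpr{Z,Y}/\partial p=\det(G)/|\widetilde Z|^3>0$, your chain-rule accounting $(f\circ\Gamma)'=(|\widetilde Z|\circ\Gamma)\,(Zf\circ\Gamma)$ is exactly the right bookkeeping, and the step from $p|_{\Gamma}\in C^{m+1}$ to $\Gamma\in C^{m+2}$ follows from $t''=2p|_\Gamma$ and $U'=p|_\Gamma$ for $U=u|_\Gamma$. Two minor remarks: the transversal family $F(\xi,\eps)$ you worry about is not actually needed beyond the base case, since the one-dimensional identity holds along every fixed characteristic curve by the argument of Theorem~\ref{th:main} applied at each point; and your route has the small advantage of bypassing the identification $H=f$ (hence the $L^1_{\mathrm{loc}}$ hypothesis of the Corollary in Section~\ref{sec:mc}), at the cost of being considerably longer and coordinate-dependent where the paper's proof is intrinsic.
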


\begin{proof} Since we have $\n_Z Z= H \nu_h$, we can write
\[
\n_Z(\n_Z Z)= Z(H) \nu_h+H \n_Z \nu_h= Z(H)\nu_h-H^2 Z.
\]
Iterating the procedure we obtain the statement. 
\end{proof}

In particular, this result holds when $f\in C_\mathbb{H}^1(\Om)$, i.e., when $f$ has horizontal derivatives of class $(k-1)$, see \cite{MR1871966}.

An important particular case is that of critical points of perimeter, possibly under a volume constraint. Assuming $C^1$ regularity of the boundary, these sets are known to have constant prescribed mean curvature from the discussion in Section~\ref{sec:pmc}. From Proposition~\ref{prop:C^k}, we immediately obtain

\begin{proposition}
\label{prop:critical}
Let $E\subset\Om$ be either a critical point of the sub-Riemannian perimeter or a critical point of the sub-Riemannian perimeter under a volume constraint. If $E$ has $C^1$ boundary, then the regular part of $\ptl E$ is foliated by $C^\infty$ characteristic curves.
\end{proposition}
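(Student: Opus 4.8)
The plan is to combine the two preceding results: Proposition~\ref{prop:C^k}, which upgrades the regularity of characteristic curves under additional regularity of $f$ along the $Z$-direction, and the discussion in Section~\ref{sec:pmc}, which shows that a critical point of the perimeter (possibly under a volume constraint) with $C^1$ boundary has \emph{constant} prescribed mean curvature. So the first step is to invoke Section~\ref{sec:pmc}: if $E$ is a critical point of the sub-Riemannian perimeter, it has vanishing prescribed mean curvature $f\equiv 0$; if it is a critical point under a volume constraint, then the Lagrange-multiplier argument culminating in the identity $Q(W)=H_0\,(d/ds)|_{s=0}|\varphi_s(E)|$ shows $E$ has constant prescribed mean curvature $f\equiv H_0$. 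In either case $f$ is a constant function on $\Om$.

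The second step is simply to observe that a constant function is of class $C^k$ for every $k\ge 1$, and in particular its restriction to any characteristic curve is $C^k$ for all $k$. Hence Proposition~\ref{prop:C^k} applies with arbitrarily large $k$, giving that the characteristic curves of $\ptl E$ are of class $C^{k+2}$ in the regular part $\ptl E\setminus(\ptl E)_0$ for every $k$, i.e.\ they are of class $C^\infty$. One should note that Proposition~\ref{prop:C^k} as stated requires the hypothesis $H\in L^1_{loc}(\Sg)$ implicitly through the identification $\n_ZZ=H\nu_h$ with $H=f$; since here $f$ is constant this is automatic, and the bootstrap $\n_Z(\n_ZZ)=Z(H)\nu_h-H^2Z$ in fact terminates immediately because $Z(H)=0$, so each further covariant derivative of $Z$ along $Z$ is a polynomial in $H$ times $Z$ or $\nu_h$ with $C^\infty$ coefficients. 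That the regular part is \emph{foliated} by these curves is just the uniqueness of characteristic curves through each point, already noted in the proof of Theorem~\ref{th:main} (following from $u\in C^1$ and the ODE $t'=2ux'$).

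There is essentially no obstacle here; the proposition is a formal corollary. The only point requiring a little care is making sure the hypotheses of Proposition~\ref{prop:C^k} are genuinely met — namely that $E$ is a set of prescribed mean curvature $f\in C^0(\Om)$ with $C^1$ boundary — which is exactly the content of the Section~\ref{sec:pmc} computation, so the proof reduces to citing that computation and then citing Proposition~\ref{prop:C^k}. Accordingly the written proof is one or two sentences: by Section~\ref{sec:pmc}, $E$ has constant prescribed mean curvature, hence $f$ is $C^k$ in the $Z$-direction for all $k$, and Proposition~\ref{prop:C^k} yields $C^\infty$ regularity of the characteristic curves; the foliation statement follows from their uniqueness.
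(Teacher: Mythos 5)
Your proposal is correct and follows exactly the paper's own route: the discussion in Section~\ref{sec:pmc} gives constant prescribed mean curvature (zero in the unconstrained case, $H_0$ under a volume constraint), and Proposition~\ref{prop:C^k} then applies for every $k$ since a constant is $C^k$ in the $Z$-direction, yielding $C^\infty$ characteristic curves. The extra observations you make (the bootstrap terminating because $Z(H)=0$, and the foliation coming from uniqueness of characteristic curves) are consistent with, though not spelled out in, the paper's one-line deduction.
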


\bibliography{a-hv}

\end{document}